\def\fskip#1{}
\newtheorem{theorem}{Theorem}
\newtheorem{defi}{Definition}
\newtheorem{lemma}{Lemma}
\def\1{{\bf 1}}
\def\cS{\mathcal{S}}
\newcommand{\cI}{\mathcal{I}}
\newcommand{\cU}{\mathcal{U}}
\newcommand{\cV}{\mathcal{V}}
\newcommand{\cT}{\mathcal{T}}
\newcommand{\cL}{\mathcal{L}}
\newcommand{\cN}{\mathcal{N}}
\newcommand{\cD}{\mathcal{D}}
\newcommand{\bx}{\mathbf{x}}
\begin{document}
\title{On Convergence Rate of Scalar Hegselmann-Krause Dynamics}
\author{\authorblockN{Soheil Mohajer}\thanks{The work of Soheil Mohajer is Supported by The Swiss National Science Foundation under Grant PBELP2-$133369$.}
\authorblockA{Dept.~of Electrical Engineering \& Computer Sciences\\
     University of California at Berkeley, Berkeley, CA 94720\\
     Email: mohajer@eecs.berkeley.edu} \and
     \authorblockN{Behrouz Touri}
  \authorblockA{Coordinated Science Laboratory\\
     University of Illinois, Urbana, IL 61801\\
     Email: touri1@illinois.edu}
}
\maketitle
\begin{abstract}
In this work, we derive a new upper bound on the termination time of the Hegselmann-Krause model for opinion dynamics. Using a novel method, we show that the termination rate of this dynamics happens no longer than $O(n^3)$ which improves the best known upper bound of $O(n^4)$ by a factor of $n$ .
\end{abstract}

\begin{keywords}
Hegselmann Krause Dynamics, Opinion Dynamics, Distributed Rendezvous, Consensus.
\end{keywords}

\section{Introduction}\label{sec:intro}
One of the key challenges in modeling of social interactions in a complex multi-agent environment is the modeling of opinion dynamics and how agents influence each other's opinion and how new technology and ideas diffuse through such a network.

One of the models that addresses such dynamics is Hegselmann-Krause dynamics which is introduced in \cite{hegselmannk}. Since, the introduction of the model many attempts have been performed to estimate the termination rate of such dynamics and still, up to now, the exact termination time in the case of scalar dynamics is not know.

Because of the simple nature of the Hegselmann-Krause model, this model inspired other engineering applications, especially those from multi-agent systems to apply this dynamics to a given problem. As an example of such applications is distributed rendezvous problem in a robotic network such as a network of space shuttles. In this problem, one may want to gather a set of robots which lack a central coordination to a common place. One approach to handle this problem is to use the Hegselmann-Krause dynamics in such a network. An overview of this method can be found in \cite{Bullo}.

A lower bound can be derived for this dynamics, that shows that this system needs at least $O(n)$ iterations before termination. The best known upper bound is developed in \cite{TouriNedich:Asilomar2011} where it is shown that the termination time happens in at most $O(n^4)$ iterations. This bounds relies on a quadratic time-varying Lyapunov function which is developed and studied in \cite{TouriNedich:CDC2012}.

Conventionally, majority of the previous studies in the domain of distributed averaging, either rely on diameter-type Lyapunov function \cite{hajnal}, \cite{tsitsiklist}, \cite{Tsitsiklis86}, \cite{Jadbabaie03}, \cite{Nedic_cdc07}, or rely on a quadratic Lyapunov functions \cite{Boyd06}, \cite{Olshevsky09a}, \cite{ergodicityb}, \cite{TouriNedich:Approx}, \cite{TouriNedich:Asilomar2011}, \cite{TouriNedich:CDC2012}.

In this study, we chose a different path to derive the new bound. The path to derive our bound is as follows: we consider a linear function of opinions of a well-chosen subset of agents. Unlike the usual Lyapunov functions, this linear function is not necessarily decreasing, however, we can upper bound the number of times it can increase, as well as the amount by which it can increase each time. We show that except an $O(n)$ steps, this function decreases by at least $O(\frac{1}{n})$ in all the remaining times. The fact that the total increment of the function  is bounded above by $O(n^2)$ will conclude our proof.

The structure of this paper is as follows: in Section \ref{sec:problem}, we briefly discuss Hegselmann-Krause dynamics and formally state the problem, with the main result of this work.  In Section~\ref{sec:lyap}, we review some known results, and present and analyze the Lyapunov function, which is the core of this work.  Finally in Section~\ref{sec:conclusion}, we conclude our discussion with some directions for future studies.

\section{Problem Statement}\label{sec:problem}
Consider $n$ agents on the real line. We denote the opinion of agent $i$ at time $t$ by $x_i(t)\in \mathbb{R}$, and so $\bx(0)=(x_1(0),x_2(0),\dots,x_n(0))\in \mathbb{R}^n$, represents  the initial profile of the agents. The opinion profile of the agents evolves over time according to the following dynamics:
\begin{align}\nonumber
x_i(t+1) =\frac{1}{|\cN_i(t)|} \sum_{j\in \cN_i(t)} x_j(t) \qquad \textrm{for $t\geq 1$},
\end{align}
where
\begin{align}\nonumber
\cN_i(t)=\{j\in [n]: |x_j(t) - x_i(t)| \leq \epsilon\}
\end{align}
is the set of neighbors of agent $i$ at time $t$, and we used the short-hand notation $[n]=\{1,2,\dots,n\}$. Note
that the opinion dynamics is completely determined by the confidence values $\epsilon$, and the initial profile $\bx(0)$.

The interesting object in this work is the profile of the system at time $t$, i.e., $\bx(t)=\left( x_1(t),x_2(t),\dots,x_n(t) \right)$, and its dynamic over time. It is shown in \cite{blondel2009krause} that this system converges to a steady-state $\bx^\star$ in finite number of steps, i.e., there is a finite time instance $T$ after which the profile does not evolve any more. The termination time of the dynamics is formally defined as
\[
T=\min\{t: \bx(t)=\bx(t'), \forall t'\geq t\}.
\]

\begin{figure*}[t]
\begin{center}
 	\psfrag{x1}[Bc][Bc]{$\cU(t)=\{x_1(t),\dots,x_{|\cU(t)|}(t)\} $}
 	\psfrag{x2}[Bc][Bc]{$x_{\nu(t)}(t) $}
	\psfrag{xq}[Bc][Bc]{$x_q(t)$}
	\psfrag{xn}[Bc][Bc]{$x_n(t)$}
	\psfrag{e}[Bc][Bc]{$\epsilon$}
	\psfrag{d}[Bc][Bc]{$d(t)$}
	\psfrag{i1}[Bc][Bc]{$\cU$}
	\psfrag{i2}[Bc][Bc]{$\cV$}
	\psfrag{i3}[Bc][Bc]{$\cI_1(t)$}
	\psfrag{i4}[Bc][Bc]{$\cI_{\nu(t)}(t)$}
	\psfrag{i5}[Bc][Bc]{$M(t)$}
	\psfrag{M}[Bc][Bc]{$\underbrace{\phantom{aaaaaaaaaaaaaaaaa}}_{M(t)} $}
\includegraphics[width=0.80\textwidth]{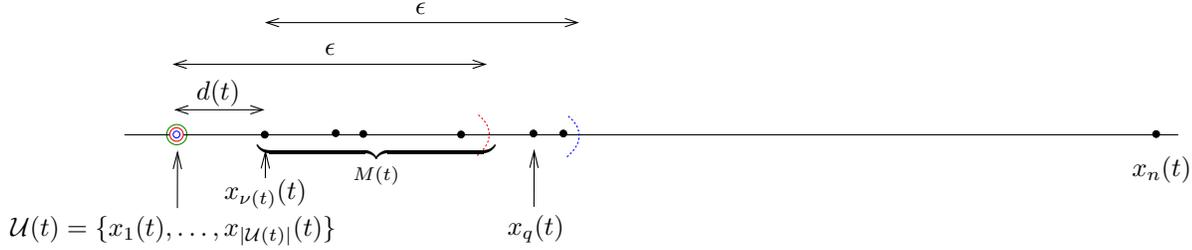}
\end{center}
\caption{Illustration of notation.}
\label{fig:notation}
\end{figure*}

Our goal in this work is to find an upper bound on $T$. To this end, we define a Lyapunov function for the dynamic, and study the variation of  this function over time for the process. This allows us to bound the number of steps its takes for the dynamic to achieve its steady-state. The following theorem states the main result of this work.

\begin{theorem}
For any scalar Hegselmann-Krause dynamics we have
\begin{align*}
T\leq 3n^3+n.
\end{align*}
\end{theorem}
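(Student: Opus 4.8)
The plan is to follow the program sketched in the introduction: track a single real-valued linear functional $\Phi(t)$ of a carefully chosen block of agents, and play off a uniform per-step decrease against a tightly controlled total increase. First I would normalize $\epsilon=1$ and relabel the agents so that $x_1(t)\le x_2(t)\le\cdots\le x_n(t)$; this ordering is preserved by the dynamics, since averaging over neighbors cannot swap the order of two agents. Two monotonicities follow immediately: $x_1(t)$ is non-decreasing and $x_n(t)$ is non-increasing, because each extreme agent averages only over agents lying on one side of it. Moreover, if two consecutive agents are separated by a gap larger than $1$ at some time, the boundary agents of the two blocks move away from one another (each averages only over its own side), so that gap never closes again. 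Hence the partition of $[n]$ into $\epsilon$-connected blocks only refines in time; in particular the leftmost block $\cU(t)$ evolves as a self-contained Hegselmann--Krause system and satisfies $\cU(t+1)\subseteq\cU(t)$, so $|\cU(t)|$ changes at most $n$ times over the whole run. These $\le n$ block-splitting instants will be the exceptional steps and will account for the additive $+n$.

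The functional $\Phi(t)$ should be a linear combination $\sum_i x_i(t)$ over a well-chosen set of consecutive agents, as in the configuration of Figure~\ref{fig:notation}, with $\cU(t),\cV(t)$ the relevant left/right blocks, $\nu(t),q$ the boundary indices, $d(t)$ the boundary gap, and $M(t)$ a block width. I would first record the two a priori bounds that drive the accounting. Since every consecutive gap inside a connected block is at most $1$, the width of any block of $k\le n$ agents is at most $k-1$, so $\Phi$ ranges over an interval of length $O(n^2)$; and at each of the $\le n$ splitting instants the jump of $\Phi$ is at most the current block width $O(n)$, so the total upward variation of $\Phi$ is $O(n^2)$. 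It is essential here that $\Phi$ not be the sum over a full connected block: on a block that forms a clique every agent moves to the common mean, the sum is exactly conserved, and a short three-agent chain already shows that the sum over an entire block can strictly increase. This degeneracy is precisely why the subset must be chosen asymmetrically (a proper prefix or suffix of the block, so that boundary agents feel a one-sided pull), and it is what the auxiliary indices $\nu(t),q$ and the gap $d(t)$ are there to pin down.

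Granting the central claim that on every non-exceptional step $\Phi$ decreases by at least $1/n$, the proof closes by a balance argument: since $\Phi$ is bounded (range $O(n^2)$) and its total increase is $O(n^2)$, its total decrease is also $O(n^2)$; dividing by the per-step decrement $1/n$ bounds the number of non-exceptional steps by $n\cdot O(n^2)=O(n^3)$, and adding the $\le n$ exceptional steps gives $T=O(n^3)$. Carrying the constants through the width bound $k-1\le n-1$ and the splitting count is what tightens this to $T\le 3n^3+n$.

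The main obstacle is exactly the uniform lower bound $\Phi(t)-\Phi(t+1)\ge 1/n$ on generic steps, together with the choice of subset that guarantees $\Phi$ is non-increasing there in the first place. Writing $\Phi(t+1)=\sum_j c_j\,x_j(t)$ with effective column weights $c_j=\sum_{i\in\cI\cap\cN_j}1/|\cN_i|$, where $\cI$ denotes the tracked block, one must show the weighted profile is pulled strictly inward. I expect the mechanism to be that the boundary agent of the block (the $\nu(t)$ of Figure~\ref{fig:notation}) has all its interactions on one side and therefore moves by at least a $1/|\cN_{\nu}|\ge 1/n$ fraction of the boundary gap $d(t)$, while a pigeonhole bound forces some interacting consecutive gap inside a block of width $M(t)$ to be at least $M(t)/(n-1)$. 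The delicate point, and where the real work lies, is converting these one-sided boundary movements into a net decrease of the whole linear functional and ruling out compensating increases from interior rearrangements; this is what dictates the exact subset in the definition of $\Phi$, and it is the crux on which the $O(n^3)$ rate rests.
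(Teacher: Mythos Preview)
Your high-level accounting scheme matches the paper's, but the concrete objects you have identified are not the ones that make the argument go through. In the paper $\cU(t)$ is \emph{not} the leftmost $\epsilon$-connected component; it is the set of agents already collapsed at the minimum value $x_1(t)$, and by Lemma~\ref{lm:MostLeft} this set is \emph{increasing}, $\cU(t)\subseteq\cU(t+1)$, the opposite of your $\cU(t+1)\subseteq\cU(t)$. Correspondingly, the exceptional times $\cI$ are the instants when new agents merge into this collapsed leftmost cluster (i.e.\ $\nu(t+1)>\nu(t)$), not the instants when connected components split; the splitting times are handled separately, in the phase decomposition of Section~\ref{sec:arbit}. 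This matters because the per-step decrease is driven by the interaction between the collapsed cluster $\cU(t)$ and the single agent $x_{\nu(t)}$ just to its right, and the key structural fact is Lemma~\ref{lm:non-empty}: on every non-merging step, $x_{\nu(t)}$ sees at least one agent that $x_1$ does not. Your proposal never isolates this mechanism.

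Second, the functional is not a plain sum $\sum_{i\in S} x_i(t)$ over a prefix. A left-prefix sum \emph{increases} when the leftmost agents drift right, so your sign is wrong, and simply flipping the sign does not rescue it: the paper's function is $\cL(t)=\sum_{i\in\cT(t)}\bigl(x_n(t)-x_i(t)\bigr)$ with $\cT(t)=\cU(t)\cup\{\nu(t)\}$, and the anchor $x_n(t)$ is essential. It is what makes $\cL(t)\ge 0$ throughout, what absorbs the jump when $|\cU(t)|$ increases (producing the clean telescoping bound $\sum_{t\in\cI}[\cL(t+1)-\cL(t)]\le[\,n-\nu(0)\,]n\epsilon$), and what lets the monotonicity $x_n(t+1)\le x_n(t)$ contribute the right sign on $\cD$. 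The actual work is then the explicit computation that for $t\in\cD$ the pair $(x_1,x_{\nu})$ moves right by at least $\epsilon/(3n)$ in total, using Lemma~\ref{lm:non-empty} to feed $x_{\nu}$ an extra agent at distance $\ge\epsilon$ from $x_1$; this yields $|\cD|\le 3n^3$ and, with $|\cI|\le n$, the bound $T\le 3n^3+n$. Your sketch leaves exactly this computation as the ``main obstacle'' and, with the wrong $\cU$, the wrong exceptional set, and no anchor in $\Phi$, it does not yet contain the ingredients needed to close it.
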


\section{The Lyapunov Function}\label{sec:lyap}
We need to define a some notation and review some known results before presenting and analyzing the Lyapunov function.  Fig.~\ref{fig:notation} is a pictorial representation of the following definitions.

Without loss of generality we may assume that the agents are labelled so that their initial profile is in an increasing order, that is $x_1(0)\leq x_2(0)\leq \cdots \leq x_n(0)$. It is easy to verify that this order is preserved over time, as precisely stated in the following lemma.
\begin{lemma}
The order of the opinion of the agents is preserved over time, i.e., $x_1(t)\leq x_2(t)\leq \cdots \leq x_n(t)$, for all time indices $t$.
\label{lm:order}
\end{lemma}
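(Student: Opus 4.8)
The plan is to prove the order-preservation claim by induction on the time index $t$, showing that if the opinions are sorted at time $t$ then they remain sorted at time $t+1$. The base case $t=0$ holds by our relabelling assumption. For the inductive step, I would fix an arbitrary pair of adjacent agents $i$ and $i+1$ with $x_i(t) \leq x_{i+1}(t)$, and aim to show $x_i(t+1) \leq x_{i+1}(t+1)$, since establishing the inequality for all adjacent pairs yields the full chain.

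The key observation I would exploit is a structural fact about the neighbor sets under the sorted ordering: because the confidence sets are intervals of radius $\epsilon$ around each agent, the neighborhoods $\cN_i(t)$ are themselves contiguous blocks of indices (an interval in $[n]$). Concretely, for the sorted profile, agent $i$'s neighbors form a set $\{\ell_i, \ell_i+1, \dots, r_i\}$ where $\ell_i$ and $r_i$ are the leftmost and rightmost indices within distance $\epsilon$. The crucial monotonicity is that both endpoints are nondecreasing in $i$, i.e.\ $\ell_i \leq \ell_{i+1}$ and $r_i \leq r_{i+1}$; this follows from $x_i(t)\leq x_{i+1}(t)$ together with the definition $|x_j(t)-x_i(t)|\leq \epsilon$. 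In other words, the neighbor interval of agent $i+1$ is a ``rightward shift'' of that of agent $i$.

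Having established this interval structure, the comparison $x_i(t+1) \leq x_{i+1}(t+1)$ reduces to comparing two averages, namely the average of $\{x_j(t)\}$ over $j\in\cN_i(t)$ against the average over $j\in\cN_{i+1}(t)$. I expect the main obstacle to be handling the case where the two neighbor sets have different cardinalities, since then one cannot simply pair up terms and a naive term-by-term argument fails. The clean way I would address this is to invoke the following elementary lemma about averages of a sorted sequence: if $a_1\leq a_2 \leq \cdots \leq a_n$ and two index intervals $I=[\ell_i,r_i]$ and $J=[\ell_{i+1},r_{i+1}]$ satisfy $\ell_i\leq \ell_{i+1}$ and $r_i\leq r_{i+1}$, then the mean of $\{a_j : j\in I\}$ is at most the mean of $\{a_j : j\in J\}$. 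Intuitively, shifting an averaging window to the right over a nondecreasing sequence can only increase (or keep constant) the average.

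To prove this averaging lemma I would reduce the general shift to a sequence of single-step moves of one endpoint at a time (first slide the left endpoint from $\ell_i$ to $\ell_{i+1}$, then the right endpoint from $r_i$ to $r_{i+1}$), and for each single step verify directly that dropping a smaller-or-equal element from the left, or appending a larger-or-equal element on the right, does not decrease the mean; this is a one-line inequality comparing a mean to the element being added or removed. Chaining these monotone steps gives the desired inequality, and hence $x_i(t+1)\leq x_{i+1}(t+1)$, completing the induction.
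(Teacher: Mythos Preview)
Your argument is correct and is the standard proof of this well-known fact about scalar Hegselmann--Krause dynamics. The paper itself does not supply a proof: it merely remarks that ``it is easy to verify that this order is preserved over time'' and then states the lemma without further justification. So there is no approach to compare against; your write-up simply fills in what the paper omits.

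One small care point worth making explicit when you carry out the endpoint-sliding argument: to avoid passing through an empty interval when $\ell_{i+1}>r_i$, do the two slides in the order ``extend the right endpoint first, then retract the left endpoint.'' Since $i\in\cN_i(t)$ and $i+1\in\cN_{i+1}(t)$, both $[\ell_i,r_i]$ and $[\ell_{i+1},r_{i+1}]$ are nonempty, and the intermediate interval $[\ell_i,r_{i+1}]$ is nonempty as well, so each single step is well defined. With that ordering, each appended element on the right is at least the current maximum (hence at least the current mean), and each dropped element on the left is the current minimum (hence at most the current mean), so every step weakly increases the average, exactly as you outlined.
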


\begin{defi}
We denote by $\cU(t)$ the set of agents with agreed opinion on the minimum value among all the agents at time $t$, i.e.,
\[
\cU(t)=\left\{i: x_i(t) \leq x_j(t), \  \forall j=1,\dots,n \right\},
\]
\end{defi}

It is clear that once two agents share the same opinion  at time $t$, they share the same neighborhood $\cN(t)$ for the next time instance, and therefore they follow the same dynamics from time $t$ onward. The following lemma presents this property of $\cU(t)$.

\begin{lemma}
The sequence of sets $\{\cU(t)\}$ is  increasing over time, i.e.,
\begin{align}\nonumber
\cU(0) \subseteq \cU(1) \subseteq \cU(2) \subseteq \cdots \subseteq \cU(T).
\end{align}
\label{lm:MostLeft}
\end{lemma}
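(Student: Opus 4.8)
The plan is to establish the one-step inclusion $\cU(t)\subseteq\cU(t+1)$ for an arbitrary time $t$; the full chain $\cU(0)\subseteq\cU(1)\subseteq\cdots\subseteq\cU(T)$ then follows immediately by transitivity (a trivial induction on $t$). So I fix $t$ and take an arbitrary $i\in\cU(t)$. Writing $m(t)=\min_j x_j(t)$, the definition of $\cU(t)$ gives $x_i(t)=m(t)$, so \emph{all} agents in $\cU(t)$ carry exactly the same opinion value. Under the increasing labelling of Lemma~\ref{lm:order} this makes $\cU(t)$ a prefix $\{1,\dots,|\cU(t)|\}$ of the agents, and in particular $1\in\cU(t)$. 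The first key step is to observe that, since the neighborhood $\cN_i(t)=\{j:|x_j(t)-x_i(t)|\le\epsilon\}$ depends on $i$ only through the value $x_i(t)$, all agents in $\cU(t)$ share one common neighborhood set; applying the averaging update, they therefore all map to a single common value, say $c$, so that $x_i(t+1)=c$ for every $i\in\cU(t)$. This is precisely the ``equal opinion implies equal subsequent dynamics'' remark stated just before the lemma.

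The second step is to verify that this common value $c$ is still the global minimum at time $t+1$, which is exactly the condition for each $i\in\cU(t)$ to lie in $\cU(t+1)$. Here I would invoke Lemma~\ref{lm:order}: since the ordering $x_1(t+1)\le x_2(t+1)\le\cdots\le x_n(t+1)$ is preserved, agent $1$ still attains the minimum at time $t+1$, i.e.\ $x_1(t+1)=\min_j x_j(t+1)$. Because $1\in\cU(t)$, its updated value coincides with the common value, so $c=x_1(t+1)=\min_j x_j(t+1)$. Consequently $x_i(t+1)=c$ is minimal for every $i\in\cU(t)$, which is the statement $i\in\cU(t+1)$, and hence $\cU(t)\subseteq\cU(t+1)$.

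I do not expect a genuine computational obstacle; the entire content is the combination of (i) identical opinions forcing identical one-step updates and (ii) monotonicity of the sorted profile. The one point that actually requires care is the claim that the updated common value remains the \emph{global} minimum rather than being overtaken by some agent outside $\cU(t)$, and this is exactly what order preservation rules out: the leftmost agents stay leftmost, so no other agent can slip below them. If one preferred to avoid appealing to the prefix structure, the same conclusion follows directly from Lemma~\ref{lm:order} applied to the pair $(i,j)$: every $j$ satisfies $x_i(t)=m(t)\le x_j(t)$, and order preservation then yields $x_i(t+1)\le x_j(t+1)$ for all $j$, giving $i\in\cU(t+1)$ at once.
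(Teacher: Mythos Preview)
Your proof is correct. The paper does not give a formal proof of this lemma; it only records the observation preceding the statement that agents sharing the same opinion share the same neighborhood and hence the same subsequent dynamics. Your argument makes explicit the second ingredient the paper leaves implicit, namely that the common updated value of the agents in $\cU(t)$ is again the global minimum, which you correctly extract from the order preservation of Lemma~\ref{lm:order}. So your approach is the natural fleshing-out of the paper's one-line justification rather than a different route.
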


Let $\nu(t)=|\cU(t)|+1$, which implies $x_{\nu(t)}$ is agent with smallest index whose opinion is located at the second most left position on the real line at time $t$:
\[
\nu(t)=\min\{j: x_j(t) \leq x_i(t) , \ \forall i\in[n]\setminus \cU(t) \}.
\]

Next, we define\footnote{Note that in general there may be more than a single agent collapsed at the most second most left position. However, $\cT(t)$  includes only the one with smallest index.} $\cT(n)=\cU(n) \cup \{x_{\nu(t)}(t)\}$. This is the set of objects who define the Lyapunov function as follows.

\begin{defi}
For the scalar Hegselmann-Krause dynamics, we define the Lyapunov function as
\begin{align}
\cL(t) &= \sum_{i\in \cT(t)} \left( x_n(t) - x_i(t)\right)\nonumber\\
&= |\cU(t)| \Big(x_n(t) - x_1(t)\Big) + \Big(x_n(t) - x_{\nu(t)}(t)\Big)
\end{align}
\end{defi}

We can bound the value of $\cL(t)$ at the initial and steady states.  We also study the behavior of $\cL(t)$ as dynamics evolves to find an upper bound for the termination time of the dynamics.

In the following section, we first focus on dynamics with singular point steady-state, i.e., we assume that at the termination time we have $x_1(T)=x_2(T)=\cdots=x_n(T)$. We show that the termination time for such dynamics satisfies the claim of Theorem~1. Then we generalize this argument to cover systems arbitrary steady-state in Section~\ref{sec:arbit}. 

\subsection{Dynamics with Singular-Opinion Steady-State}\label{sec:singular}
In the following we assume that the profile of the agents at the termination time is single point, and the system never splits into isolated sub-systems. 
First note that
\begin{align}
\cL(0) &< \nu(0) (x_n(0)-x_1(0)) \leq \nu(0) n\epsilon. \label{eq:L0}
\end{align}
We also point out that the value of the function at the steady-state is a finite and non-negative real number. If the system converges to a single opinion (as the focus of this section is), then $\cL(T)= 0$, but $\cL(T)\geq 0$ in general.

It is worth mentioning that  $\cL(t)$ inherently depends on longest distance between the agents in the system, as well as the number agents collapsed at the most left position. While the former decreases over time (Lemma~\ref{lm:order}), the latter ($|\cU(t)|$) is increasing. As a subsequence, the Lyapunov function is not a monotonic function over time. Fig.~\ref{fig:Lyapunov} shows an example of evolution of $\cL(t)$. However, each increment in $\cL(t)$ corresponds to merging of at least a agent to $\cU(t)$, and therefore since this can only happen $n$ times within the entire process, $\cL(t)$ is decreasing unless for at most $n$ time instances. To make it more precise, define\footnote{Recall that $\{\nu(t)\}$ is in increasing sequence.}
\begin{align*}
\cI=\{0\leq t < T: \nu(t+1)>\nu(t) \},\\
\cD=\{0\leq t < T: \nu(t+1)=\nu(t) \}.
\end{align*}

In the following we analyze the variation of $\cL(t)$ for $t\in\cI$
and $t\in\cD$, separately. For $t\in\cI$, although the Lyapunov function
may increase, we show that the total amount of increment for all $t\in\cI$
cannot exceed a certain upper bound. On the other hand, for $t\in\cD$ the
Lyapunov function decreases, and the amount of decrement is lower bounded.
Hence, the number of time instances in $\cD$ is bounded from above.

\paragraph{Variation of $\cL(t)$ for $t\in\cI$}

For  $t\in \cI$, we have $\nu(t+1)\geq \nu(t)+1$. Let  $M(t)= \cN_1(t) \setminus  \cT(t) $, $m(t)=|M(t)|$, and $\tilde{x}(t)=\frac{1}{m}\sum_{i\in M(t)} x_i(t)$. It is clear that $x_i(t)\geq x_{\nu(t)}(t)$ for $i\in M(t)$, and hence $\tilde{x}(t)\geq x_{\nu(t)} (t)$. Therefore,
\begin{align}
x_1(t+1)&= \frac{\sum_{i\in\cN_1(t)} x_i(t)}{|\cN_1(t)|}\nonumber\\
&=\frac{|\cU(t)| x_1(t) + x_{\nu(t)}(t) + \sum_{i\in M(t)} x_i(t)}{ |\cU(t)|+1+|M(t)|}\nonumber\\
&\geq \frac{|\cU(t)| x_1(t) + x_{\nu(t)}(t) }{ |\cU(t)|+1},\nonumber
\end{align}
which implies
\begin{align}
|\cU(t)| x_1(t) + x_{\nu(t)}(t) \leq \nu(t) x_1(t+1) \label{eq:eq1}
\end{align}
Therefore, the difference between $\cL(t+1)$ and $\cL(t)$ for $t\in\cI$ can be upper bounded by
\begin{align}
&\cL(t+1)\hspace{-1pt}-\hspace{-1pt} \cL(t)\hspace{-1pt} \nonumber\\
&\hspace{-2pt}=\hspace{-2pt}\Big[ \hspace{-1pt}\nu(t\hspace{-2pt}+\hspace{-2pt}1)x_n(t\hspace{-2pt}+\hspace{-2pt}1)\hspace{-2pt}-\hspace{-2pt}
\left|\cU(t\hspace{-1pt}+\hspace{-1pt}1)\right|x_1(t\hspace{-2pt}+\hspace{-2pt}1)\hspace{-2pt}-\hspace{-2pt}x_{\nu(t\hspace{-1pt}+\hspace{-1pt}1)}(t\hspace{-2pt}+\hspace{-2pt}1)\hspace{-1pt}\Big]\nonumber\\
&\phantom{\hspace{-2pt}=\hspace{-2pt}}-\Big[ \nu(t)x_n(t) -\hspace{-1pt}\left|\cU(t)\right| x_1(t)-x_{\nu(t)}(t) \Big]\nonumber\\
&\leq \Big[ \nu(t+1) x_n(t+1)-\nu(t) x_n(t)\Big]\nonumber\\
&\phantom{\leq} - \Big[\nu(t+1)-\nu(t) \Big]x_1(t+1)\nonumber\\
&\phantom{\leq} + \Big[ x_1(t+1) -x_{\nu(t+1)} (t+1)\Big]\label{eq:use-of-eq1}\\
&\leq \Big[ \nu(t+1) x_n(t+1)-\nu(t) x_n(t+1)\Big]\nonumber\\
&\phantom{\leq} - \Big[\nu(t+1)-\nu(t) \Big]x_1(t+1)\label{eq:xn-dec}\\
&= \Big[ \nu(t+1)-\nu(t) \Big]\Big[ x_n(t+1)-x_1(t+1)\Big]\nonumber\\
&\leq \Big[ \nu(t+1)-\nu(t) \Big] n\epsilon
\end{align}
where in \eqref{eq:use-of-eq1} we used the inequality in \eqref{eq:eq1}, and \eqref{eq:xn-dec}
is due the facts that $x_n(t)\geq x_n(t+1)$ and $x_1(t+1) \leq x_{\nu(t+1)}(t+1)$.
Hence, we have

\begin{figure}[t]
\begin{center}
 	\psfrag{t1}[Bc][Bc]{$t_1 $}
 	\psfrag{t2}[Bc][Bc]{$t_2$}
	\psfrag{t3}[Bc][Bc]{$t_3$}
	\psfrag{t4}[Bc][Bc]{$t_4$}
	\psfrag{t5}[Bc][Bc]{$t_5$}
	\psfrag{t6}[Bc][Bc]{$t_6$}
	\psfrag{t}[Bc][Bc]{$t$}
	\psfrag{T}[Bc][Bc]{$T$}
	\psfrag{L}[Bc][Bc]{$\cL(t)$}
	\psfrag{L0}[Bc][Bc]{$\cL(0)$}
\includegraphics[width=0.47\textwidth]{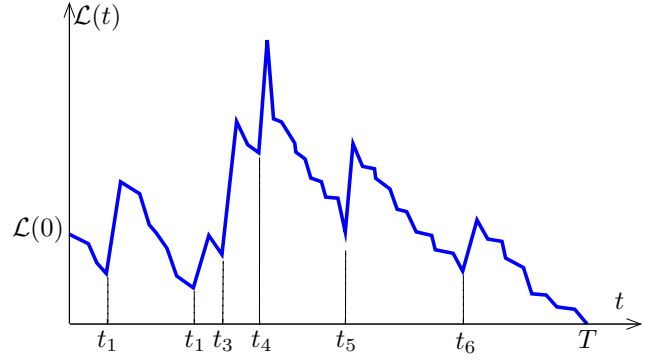}
\end{center}
\caption{Evolution of $\cL(t)$: For any dynamic with singular-opinion steady-state, $\cL(T)=0$. The function is always decreasing except at agents $\cI=\{t_1,t_2,\dots, t_6\}$, with $|\cI|\leq n$.}
\label{fig:Lyapunov}
\end{figure}

\begin{align}
\sum_{t\in \cI} [ \cL(t+1)&- \cL(t) ] \leq \sum_{t\in \cI} \Big[ \nu(t+1)-\nu(t) \Big] n\epsilon\nonumber\\
&= \Big[ \nu(T)-\nu(0) \Big] n\epsilon
= \Big[ n-\nu(0) \Big] n\epsilon\label{eq:1}
\end{align}
Therefore, using \eqref{eq:L0} we get
\begin{align}\nonumber
\cL(0) + \sum_{t\in \cI} \left[ \cL(t+1)- \cL(t) \right] \leq n^2\epsilon.
\end{align}

\paragraph{Variation of $\cL(t)$ for $t\in\cD$}

In the following we study the evolution of $\cL(t)$ for $t\in \cD$. First note the following lemma.
\begin{lemma}\label{lm:non-empty}
For any $t\in \cD$, we have $\cN_{\nu(t)}(t)\setminus \cN_{1}(t) =\emptyset$.
\end{lemma}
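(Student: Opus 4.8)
The plan is to prove the equivalent inclusion $\cN_{\nu(t)}(t)\subseteq\cN_1(t)$ by contraposition. I would suppose that some agent $q$ lies in $\cN_{\nu(t)}(t)\setminus\cN_1(t)$ and argue that this forces agent $\nu(t)$ to be absorbed into the leftmost cluster $\cU$ at the next step, so that $\nu(t+1)>\nu(t)$ and hence $t\in\cI$, contradicting the assumption $t\in\cD$.

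The first step is to translate the set inclusion into a statement about positions. Since $x_1(t)$ is the smallest opinion and $x_{\nu(t)}(t)$ the second-smallest, and since in this subsection the system does not split, so that $x_{\nu(t)}(t)\le x_1(t)+\epsilon$, the definition of the neighbor sets gives $\cN_1(t)\subseteq\cN_{\nu(t)}(t)$ together with
\[
\cN_{\nu(t)}(t)\setminus\cN_1(t)=\{q: x_1(t)+\epsilon<x_q(t)\le x_{\nu(t)}(t)+\epsilon\}.
\]
Thus the claim is precisely that no agent occupies the interval $(x_1(t)+\epsilon,\,x_{\nu(t)}(t)+\epsilon]$ at time $t$, and the contrapositive supposes that such a $q$ exists.

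Next I would compare the one-step updates of agents $1$ and $\nu(t)$. Both $x_1(t+1)$ and $x_{\nu(t)}(t+1)$ are averages of the current opinions over $\cN_1(t)$ and $\cN_{\nu(t)}(t)$; by Lemma~\ref{lm:order} order is preserved, so $x_{\nu(t)}(t+1)\ge x_1(t+1)$, and membership $t\in\cD$ is exactly the statement that this inequality is strict. Writing $\cN_{\nu(t)}(t)$ as the disjoint union of $\cN_1(t)$ and $\cN_{\nu(t)}(t)\setminus\cN_1(t)$, I would express $x_{\nu(t)}(t+1)$ as a convex combination of $x_1(t+1)$ and the mean of the opinions in $\cN_{\nu(t)}(t)\setminus\cN_1(t)$, and aim to show that the hypothesised $q$ is incompatible with $x_{\nu(t)}(t+1)>x_1(t+1)$, forcing the merge $x_{\nu(t)}(t+1)=x_1(t+1)$.

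I expect this last implication to be the main obstacle. The agents in $\cN_{\nu(t)}(t)\setminus\cN_1(t)$ sit strictly to the right of every agent of $\cN_1(t)$, so their presence tends to pull $x_{\nu(t)}(t+1)$ above $x_1(t+1)$ rather than toward it; turning the existence of such an agent into equality of the two updates therefore demands a delicate weighted-average accounting, controlled precisely by the hypothesis $t\in\cD$. Reconciling this averaging comparison with the desired merge is the crux of the argument, and is where I would concentrate the effort.
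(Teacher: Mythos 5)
You are attacking the wrong statement: the ``$=\emptyset$'' in the lemma is a typo. The label \texttt{lm:non-empty}, and the very next line of the paper (``let $q\in\cN_{\nu(t)}(t)\setminus\cN_1(t)$'', which requires an element of that set), show the intended claim is $\cN_{\nu(t)}(t)\setminus\cN_1(t)\neq\emptyset$ for every $t\in\cD$. Taken literally, the statement you set out to prove is false. Concretely, take three agents at $0$, $0.9\epsilon$, $1.8\epsilon$: then $\cU(0)=\{1\}$, $\nu(0)=2$, $\cN_1(0)=\{1,2\}$, $\cN_2(0)=\{1,2,3\}$, and one step gives $x_1(1)=0.45\epsilon < x_2(1)=0.9\epsilon$, so $\nu(1)=\nu(0)$ and $t=0\in\cD$, while $\cN_{\nu(0)}(0)\setminus\cN_1(0)=\{3\}\neq\emptyset$.

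This also explains why the step you yourself flag as ``the crux'' is unbridgeable: it is exactly backwards. If $q$ exists, every opinion in $\cN_{\nu(t)}(t)\setminus\cN_1(t)$ exceeds $x_1(t)+\epsilon$, which is an upper bound on every opinion in $\cN_1(t)$; so the mean over the difference set strictly exceeds the mean over $\cN_1(t)$, and $x_{\nu(t)}(t+1)$, being a strict convex combination of the two, is \emph{strictly} larger than $x_1(t+1)$. Thus the existence of $q$ \emph{prevents} the merge rather than forcing it, and no weighted-average accounting can turn it into $x_{\nu(t)}(t+1)=x_1(t+1)$; in fact, existence of $q$ is consistent with (indeed essentially equivalent to) $t\in\cD$. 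The correct proof is your mechanism run in reverse, and your first step survives intact: under the standing no-split assumption of Section~\ref{sec:singular} one has $\cN_1(t)\subseteq\cN_{\nu(t)}(t)$ (otherwise $x_{\nu(t)}(t)-x_1(t)>\epsilon$ and $\cU(t)$ is isolated, contradicting convergence to a single opinion), and if the difference set were empty the two neighborhoods would coincide, both agents would average the same multiset, giving $x_{\nu(t)}(t+1)=x_1(t+1)$; then $\nu(t)\in\cU(t+1)$, so $\nu(t+1)>\nu(t)$ and $t\in\cI$, contradicting $t\in\cD$. That non-emptiness is precisely what the subsequent decrement estimate for $t\in\cD$ consumes when it picks $q$ with $x_q(t)\geq x_1(t)+\epsilon$.
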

\begin{proof}
First, it is clear that $\cN_1(t) \subseteq \cN_{\nu(t)}(t)$, otherwise the distance $x_{\nu(t)}(t)- x_1(t) > \epsilon$, and hence the entire system is split into two isolated systems $\cU(t)$ and $[n]\setminus \cU(t)$, and cannot converge to a singular point.

Now, assume the claim is not true, and $\cN_1(t)=  \cN_{\nu(t)}(t)$.  In this case, we have
\begin{align}\nonumber
x_{\nu(t)}(t+1) &= \frac{1}{|\cN_{\nu(t)}(t)|}\sum_{j\in \cN_{\nu(t)}(t)} x_j(t)\nonumber\\
&= \frac{1}{|\cN_{1}(t)|}\sum_{j\in \cN_{1}(t)} x_j(t)=x_1(t+1),
\end{align}
and therefore, $\nu(t)\in \cU(t+1)$, which yields $\nu(t+1) > \nu(t)$. Note that the latter is in contradiction with  $t\in \cD$.
\end{proof}

Now, let $q \in \cN_{\nu(t)} (t)\setminus \cN_1(t) $.
Denote by $d(t)=x_{\nu(t)}(t)-x_1(t) < \epsilon$ the distance between the most left and second left set of collapsed agents.
Let  $M(t)= \cN_1(t) \setminus  \cT(t) $, $m(t)=|M(t)|$, and $\tilde{x}(t)=\frac{1}{m(t)}\sum_{i\in M(t)} x_i(t)$. Note that $\tilde{X}(t) \geq x_{\nu(t)}(t)$, because all the elements in $M(t)$ have value which are not less than $x_{\nu(t)}$. Hence we have
\begin{align*}
x_1(t&+1)= \frac{\sum_{i\in \cN_1(t)} x_i(t) }{|\cN_1(t)|}\\
&=  \frac{\sum_{i\in \cU(t)} x_i(t) + x_{\nu(t)}(t) + \sum_{i\in M(t)} x_i(t) }{\nu(t)+m(t)}\\
&=  \frac{|\cU(t)|  x_1(t) + (x_1(t)+d(t)) +m(t) \tilde{X}(t) } {\nu(t)+m(t)}\\
&\geq \frac{\cU(t)| x_1(t) + (x_1(t)+d(t)) +m(t)(x_1(t)+d(t)) } {\nu(t)+m(t)}\\
&= x_1(t) + \frac{m(t)+1}{\nu(t)+m(t)} d(t).
\end{align*}
The same variation holds for all the agents in $\cU(t)$, since they all share the same influencing neighbors, and hence,
\[
x_1(t+1)=x_2(t+1)=\cdots =x_{|\cU(t)|}(t+1).
\]

On the other hand,
\begin{align*}
x_{\nu(t)}&(t+1)= \frac{\sum_{i\in \cN_{1}(t)} x_i(t) + \sum_{i\in \cN_{\nu(t)}(t)\setminus \cN_{1}(t)} x_i(t)   }{|\cN_{1}(t)|+ |\cN_{\nu(t)}(t) \setminus \cN_{1}(t)|}\\
&\stackrel{(a)}{\geq} \frac{\sum_{i\in \cN_{1}(t)} x_i(t) + x_q(t)   }{|\cI_{1}(t)|+1}\\
&\stackrel{(b)}{\geq} \frac{|\cU(t)| x_{1}(t) + (m(t)+1)x_{\nu(t)}(t) + (x_{1}(t) +\epsilon  ) }  {\nu(t)+m(t)+1}\\
&= \frac{\nu(t) (x_{\nu(t)}(t)-d(t)) + (m(t)+1)x_{\nu(t)}(t) +  \epsilon   }  {\nu(t)+m(t)+1}\\
&= x_{\nu(t)}(t) + \frac{ \epsilon - \nu(t)d(t)}{\nu(t)+m(t)+1 }.
\end{align*}
where $(a)$ holds since the more agents exist in $\cN_{\nu(t)}(t)\setminus \cN_{1}(t)$, the more moves $x_{\nu(t)}(t)$ towards the right. Moreover, in $(b)$ we used the fact that $x_q(t) \in \cN_{\nu(t)}(t)\setminus \cN_{1}(t)$, which implies $x_q(t)\geq x_1(t)+\epsilon$.

Therefore, assuming $\nu(t+1)=\nu(t)$, we have
\begin{align*}
\cL(t)&- \cL(t+1)\nonumber\\
&= \sum_{i=1}^{\nu(t)} \Big[ (x_n(t)-x_i(t)) - (x_n(t+1)-x_i(t+1)) \Big]\\
&\stackrel{(c)}{\geq} \sum_{i=1}^{\nu(t)} \left[ x_i(t+1)-x_i(t) \right]\\
&\geq |\cU(t)| \frac{m(t)+1}{\nu(t)+m(t)} d(t) + \frac{ \epsilon - \nu(t)d(t)}{\nu(t)+m(t)+1 }\\
&> \frac{ \epsilon }{\nu(t)+m(t)+1 } + \frac{(\nu(t)-1)(m(t)+1)-\nu(t)}{ \nu(t)+m(t)} d(t)
\end{align*}
where $(c)$ holds since $x_n(t)\geq x_n(t+1)$.
Note that the second term above is strictly positive provided that $m(t)>0$. Therefore, for each $t\in\cD$ with $m(t)>0$, the Lyapunov function decreases by at least $\epsilon/n$.

It remains to study the excluded case, where $m(t)=0$. In this case the decrement in the Lyapunov function can be lower bounded by
\begin{align*}
&\cL(t)- \cL(t+1) \geq
 \frac{ \epsilon }{\nu(t)+1 } + \left[ \frac{\nu(t)-1}{\nu(t)}- \frac{\nu(t)}{ \nu(t)+1} \right] d(t)\\
 &\qquad\qquad= \frac{1}{\nu(t)+1} \left[ \epsilon - \frac{d(t)}{\nu(t)}\right]\stackrel{(d)}{\geq} \frac{1}{\nu(t)+1} \left[ \epsilon - \frac{\epsilon}{\nu(t)}\right]\\
 &\qquad\qquad= \frac{\nu(t)-1}{\nu(t)(\nu(t)+1)}\epsilon \stackrel{(e)}{\geq} \frac{1}{3n} \epsilon
\end{align*}
where $(d)$ follows from the fact that $d(t)\leq \epsilon$, and the last inequality in $(e)$ always holds for $3\leq \nu(t)+1\leq n$.

Hence, $\cL(t)$ is decreased by at least $\frac{\epsilon}{3n}$ for each $t\in \cD$. Therefore,
\begin{align*}
0 &\leq  \cL(T) = \cL(0)+ \sum_{t=0}^{T-1} \left[ \cL(t+1)-\cL(t)\right] \nonumber\\
&= \cL(0) + \sum_{t\in \cI} \left[\cL(t+1)-\cL(t)\right] + \sum_{t\in \cD} \left[\cL(t+1)- \cL(t)\right]\\
&\leq  n^2\epsilon - |\cD| \frac{1}{3n} \epsilon,
\end{align*}
which implies $|\cD|\leq 3n^3$. Hence, $T = |\cI| + |\cD| \leq n+ 3n^3=O(n^3)$. Therefore, it takes at most $3n^3+n$ steps to have $\cL(T)\geq 0$, which is the value of the function at the steady state for a singular-point steady-state system. 

\subsection{Dynamics with Arbitrary Steady-State}\label{sec:arbit}
In this part we relax the assumption we made on the steady-state of the system in Section~\ref{sec:singular}, and show that the termination time of the system satisfies the claim on Theorem~1, regardless of the steady-state of the system. To this end, we split the entire process into several phases. The first phase includes the process from its beginning, until the most left agent achieves its steady-state. At this time, the system splits into two isolated sub-systems, namely the most left ones which remain constant for the rest of the process, and the remaining agents. The second phase starts at this time slot, and covers all the slots until the most left agent in the remaining set of agents gets to its steady-state. We can define the next phases similarly, and by adding up the duration of these phases we can bound the termination time of the entire process. 

Recall the argument in Section~\ref{sec:singular}, and recall that it remains unchanged for system with an arbitrary steady-state, except the claim on Lemma~\ref{lm:non-empty}, in which we used the fact that $\cN_1(t) \subseteq \cN_{\nu(t)}(t)$ holds over the entire process. This assumption violates if and only if $x_{\nu(t)}(T_1) -x_1(T_1) >\epsilon$ for some $T_1$. In this case, we have $\cN_1(T_1)=\cU(T_1)$, and the opinion of the agents in $\cU(T_1)$ remains constant for the rest of the time, $t\geq T_1$. In other words, the system splits into a time-invariant part $\cU(T_1)$ and the remaining $[n]\setminus \cU(T_1)$. Hence, we can proceed with the termination time of the remaining systems with $n-|\cU(T_1)|$ agents. 

As stated above, we denote by  $T_1$ the first time instance at which the agents in $\cU(T_1)$ get split from the rest of the system. Now, consider the evolution of the profile from $t=0$ to $t=T_1$. During this interval, the entire argument in Section~\ref{sec:singular} goes through. Define $\cD_1=\cD \cap \{0,1,\dots, T_1-1\}$ and $\cI_1=\cI \cap \{0,1,\dots, T_1-1\}$.  From \eqref{eq:1} we can write   
\begin{align}
\sum_{   t\in \cI_1 } [\cL(t+1)-\cL(t)] \leq \left[ \nu(T_1) -\nu(0) \right] n\epsilon.
\end{align}
Also note that the Lyapunov function decreases by at least $\epsilon/n$ for each $t\in \cI_1$. Therefore, 
\begin{align}
0 &\leq \cL(T_1) \nonumber\\
&= \cL(0) + \sum_{   t\in \cI_1 }[\cL(t+1)-\cL(t)] +  \sum_{   t\in \cD_1 }[\cL(t+1)-\cL(t)] \nonumber\\
  &\leq \nu(T_1)n\epsilon - |\cD_1|\frac{\epsilon}{3n}
\end{align}
which implies $|\cD_1|\leq 3\nu(T_1) n^2$, and hence 
\[T_1 =|\cI_1|+ |\cD_1|\leq \nu(T_1)+ 3\nu(T_1)n^2.
\]

For time slots  $t\geq T_1$, we redefine the Lyapanov function for the new agent set $[n]\setminus \cU(T_1)$, with $n-|\cU(T_1)|$ agents, and proceed until a new set of agents get isolated from the rest at some time slot $T_2$.

We can generalize this argument and define the following splitting-time set
\begin{align*}
\cS 
= \{T_1,T_2,\dots, T_{|\cS|}\},
\end{align*}
which is the set of boundary time indices of the phases defined above.
We  can further split the increasing and decreasing time sets as
\begin{align*}
\cI_k&=\cI \cup [T_{k-1},T_{k}),\\
\cD_k&=\cD \cup [T_{k-1},T_{k}),
\end{align*}
where 
\[ 
[T_{k-1},T_k)=\{T_{k-1},T_{k-1}+1,\dots, T_{k}-1\},
\] 
and $T_0=0$. 
Now, we repeat the above argument for phase the $k$-th phase, and redefine the Lyapanov function for the system with remaining agents in this phase. In order to avoid confusion,  we may use $\cU_k(t)$ and $\cL_k(t)$ to denote the set of agents with opinion at the minimum value among the remaining agents in phase $k$,  and the Lyapanov function in phase $k$, respectively. We also define $\nu_k(t)=|\cU_k(t)|+1$. Note that by definition, the system includes 
\[
n_k= n-\sum_{i=1}^{k-1} |\cU_i (T_i)|
\]
 agents in phase $k$, and this phase takes $T_k- T_{k-1}$ time slots. Hence, we have 
\begin{align}
0&\leq \cL_k(T_k) = \cL_k(T_{k-1}) + \sum_{   t\in \cI_k }[\cL_k(t+1)-\cL_k(t)] \nonumber\\
&\phantom{\leq \cL_k(T_k) = \cL_k(T_{k-1}) } +  \sum_{   t\in \cD_k }[\cL_k(t+1)-\cL_k(t)] \nonumber\\
  &\leq \nu_k(T_k) n_k\epsilon - |\cD_k|\frac{\epsilon}{3 n_k }.\nonumber
\end{align}
Therefore, 
\[
|\cD_k|\leq 3\nu_k(T_k) n_k^2.
\]
 This together with $|\cI_k|\leq \nu_k(T_k)$ implies 
\begin{align*}
T_k-T_{k-1} &\leq \left( 3n_k ^2 +1 \right) \nu_k(T_k) \leq \left( 3n ^2 +1 \right)\nu_k(T_k) .
\end{align*}

Finally, we can upper bound the termination time of the entire system by accumulating all such splitting times. 
\begin{align*}
T &=\sum_{k=1}^{|\cS|} \left(T_k- T_{k-1}\right) \leq \left( 3n^2 +1 \right)  \sum_{k=1}^{|\cS|} \nu_k(T_k) \leq n(3n^2+1)
\end{align*}
where the last inequality follows from the fact that the total number of isolated points during the process  does not exceed $n$. This concludes the desired result for dynamics with arbitrary steady-state. 

\section{Conclusion and Suggestions for Further Studies}\label{sec:conclusion}
We studied the convergence rate of the Hegselmann-Krause model for opinion dynamics and using a novel Lyapunov-type function we proved a new upper bound of $O(n^3)$ for its termination. From the practical perspective, this result is one step towards proving the scalablity of such a dynamics.

  An immediate extension to the current work is to use the technique introduced in this paper to prove a polynomial time upper bound for the termination time of the multi-dimensional Hegselmann-Krause dynamics which has been and is remained to be a very challenging problem.
\bibliographystyle{plain}
\bibliography{Krause}
\end{document}